\documentclass[12pt,leqno]{article}
\usepackage{amssymb, amscd, amsmath, amsthm}

\allowdisplaybreaks

\def\ve{\varepsilon}
\def\beq{\begin{equation}}
\def\eeq{\end{equation}}

\newtheorem{theorem}{Theorem}
\def\cite#1{{\rm [#1]}}

\newtheorem{lemma}{Lemma}
\newtheorem{corollary}{Corollary}
\theoremstyle{definition}

\numberwithin{theorem}{section}
\numberwithin{corollary}{section}
\numberwithin{lemma}{section}
\numberwithin{equation}{section}

\begin{document}

\title{Oscillation of partial sums of the M\"obius function and zeros of Riemann's zeta function}

\author{by\\
J\'anos Pintz\thanks{Supported by the Hungarian  National Research, Development and
Innovation Office (NKFIH) grants No.\ KKP133819, K147153 and EXCELLENCE 151341.}}

\date{}


\maketitle

\begin{abstract}
The oscillation of $M(x)$, the partial sum of the M\"obius function has been
in the focus of researchers in the theory of primes since the famous
conjecture of Mertens in 1905 (formulated in a weaker form by Stieltjes
in 1885 in a letter to Hermite).
The average order of the modulus of $M(x)$ in an interval of type $[0,Y]$ is clearly in connection with the distribution of primes.
The author proved at the beginning of 1980's that this average tends unconditionally to infinity with $Y$.
The present work shows that (similarly to the average order of the error term of the
Prime Number Theorem) this average agrees with great accuracy for large
values of $Y$ with the largest error term of the Riemann--von Mangoldt
prime number formula for the value $Y$.
\end{abstract}

\footnote{2020 Mathematics Subject Classification: Primary 11M26, Secondary 11N56, 11N64.}

\footnote{Keywords: Distribution of zeros of Riemann's zeta function, partial sums of the M\"obius function, oscillation of partial sums of the M\"obius function.}

\section{Introduction}
\label{sec:1}
Ingham \cite{Ing1932, Theorem 22} showed nearly hundred years ago that a rather general zero-free region of Riemann's zeta function implies an upper bound for the error term of the prime number theorem.
Let us consider the error term in the form
\beq
\label{eq:1.1}
\Delta(x) = \psi(x) - x, \ \ \ \psi(x) = \sum_{n \leq x} \Lambda(n), \ \ \ \Lambda(n) = \begin{cases}
\log p &\text{ if } n = p^m,\\
0 &\text{ otherwise}\end{cases}
\eeq
where $p$ denotes always primes, $m, n$ positive integers
and non-trivial zeros of $\zeta(s)$, Riemann's zeta function, will be written in the form $\varrho = \beta + i\gamma$.
Then a shortened form of the famous Riemann--von Mangoldt formula shows the connection between
$\Delta(x)$ and the distribution of zeros:
\beq
\label{eq:1.2}
\Delta(x) = -\sum_{|\gamma| \leq T} \frac{x^{\varrho}}{\varrho} + O\left(\frac{x}{T} \log^2 xT\right) \ \ \text{ for } \ T \leq x.
\eeq

A classical consequence of \eqref{eq:1.2} and some other well-known properties of $\zeta(s)$ is with the notation
\beq
\label{eq:1.3}
\vartheta = \sup_{\varrho~:~\zeta(\rho)=0} \beta,
\eeq
the relation
\beq
\label{eq:1.4}
\Delta(x) = O(x^\vartheta \log^2 x)
\eeq
(proved by von Koch \cite{Koc1904} for $\vartheta = 1/2$, the case of the Riemann Hypothesis (RH)).
This can even be sharpened to
\beq
\label{eq:1.5}
\Delta(x) = O(x^\vartheta) \ \ \text{ if }\ \vartheta > 1/2,
\eeq
using the classical density theorem of Carlson \cite{Car1920}
\beq
\label{eq:1.6}
N(\sigma, T) := \sum_{\substack{\varrho; \beta \geq \sigma\\
|\gamma| \leq T}} 1 = O\bigl(T^{4\sigma(1 - \sigma)}\log^C T\bigr)
\eeq
(where $C$ and later $c^*$ etc. denote in the following a generic absolute constant, not necessarily the same at different occurrences).

Ingham's \cite{Ing1932} mentioned result was specially interesting for the case $\vartheta = 1$ and sounded as follows.

\medskip
\noindent
{\bf Theorem A.}
{\it Suppose
\beq
\label{eq:1.7}
\zeta(s) \neq  0 \ \ \text{ for } \ \ \sigma > 1 - \eta(t) \ \ \ (s = \sigma + it)
\eeq
where $\eta(t)$ is for $t \geq 1$ a real function with
\begin{align}
\label{eq:1.8}
\eta(t) &\in C^1 [1, \infty),\\
\label{eq:1.9}
\eta'(t) &\leq 0,\\
\label{eq:1.10}
\eta'(t) &\to 0 \ \ \text{ as } \ \ t \to \infty,\\
\label{eq:1.11}
\frac{1}{\eta(t)} &= 0 (\log t) \ \ \text{ as } \ \ t \to \infty.
\end{align}
Let $\varepsilon$ be a fixed number satisfying $0 < \varepsilon < 1$ and let
\beq
\label{eq:1.12}
\omega_\eta(x) := \inf_{t \geq 1} \bigl(\eta(t) \log x + \log t\bigr).
\eeq
Then
\beq
\label{eq:1.13}
\Delta(x) = \sum_{p^n \leq x} \log p - x \ll x e^{-(1/2)(1 - \varepsilon) \omega_\eta(x)}.
\eeq}

\medskip
In the important special case
\beq
\label{eq:1.14}
\zeta(s) \neq 0 \ \ \text{ for } \ \ \sigma > 1 - \frac{c_1}{\log^\alpha t}, \ \ \ t > t_0,
\eeq
Ingham's theorem yields with a $c_2$ depending on $c_1$ and $\alpha$
\beq
\label{eq:1.15}
\Delta(x) \ll x \exp \left( - c_2 \log^{1/(1 + \alpha)} x \right).
\eeq
It is an important problem whether or not the inverse implication \eqref{eq:1.15} $\Rightarrow$ \eqref{eq:1.14} is true.
This question was answered in the affirmative by Tur\'an \cite{Tur1950} in 1950.
In the proof a crucial role was played by the power-sum method of Tur\'an.
Tur\'an's result was extended by W. Sta\'s \cite{Sta1961} for a wider class of functions $\eta(t)$.
Their works implied, however, \eqref{eq:1.14} with a value $c_1' < c_1 / 80$.

The author \cite{Pin1980} succeeded in proving Ingham's theorem in the sharper form
\beq
\label{eq:1.16}
\Delta(x) \ll xe^{-(1 - \ve)\omega_\eta(x)}
\eeq
in case of an arbitrary continuous decreasing function $\eta(t)$.
It was further shown in \cite{Lit1914} that if $\eta(t)$ satisfies some conditions and if \eqref{eq:1.16} is true with $1 + \ve$ in place of $1 - \ve$ then for $t > t_0$ we have \eqref{eq:1.7}.

Since $|\Delta(x)|$ takes also small values, e.g.\ $\Delta(x_n) = 0$ for a suitable sequence $x_n \to \infty$, in order to study the oscillatory behaviour of the error term we consider the functions
\beq
\label{eq:1.17}
S(x) = \max_{u \leq x} |\Delta(u)|, \ \ \ D(x) = x^{-1} \int\limits_0^x |\Delta(u)|du.
\eeq
The aim of our work was to show the existence of a real function $\omega(x)$ -- depending in a simple way on the distribution of zeros of $\zeta(s)$ -- which determines the functions $S(x)$ and $D(x)$ with high accuracy.
An interesting consequence of our investigations was that the asymptotic behaviour of $S(x)$ and $D(x)$ turned out to be nearly the same.

The classical general $\Omega$-result before Tur\'an \cite{Tur1950} was only that of Phragm\'en \cite{Phr1891} which asserted for any zero $\varrho_0 = \beta_0 + i\gamma_0$
\beq
\label{eq:1.18}
\Delta(x) = \Omega\bigl(x^{\beta_0 - \varepsilon}\bigr) \ \ \text{ for any } \ \varepsilon > 0
\eeq
and was completely ineffective.
Supposing RH, Littlewood \cite{Lit1914} could prove
\beq
\label{eq:1.19}
\Delta(x) = \Omega\left(\sqrt{x} \log_3 x\right),
\eeq
where $\log_\nu x$ denotes the $\nu$ times iterated logarithmic function.
\eqref{eq:1.18} implies clearly
\beq
\label{eq:1.20}
\Delta(x) = \Omega(x^{\vartheta - \varepsilon}).
\eeq

This, together with \eqref{eq:1.4}, indicates the connection between the oscillation of $\Delta(x)$ and the
distribution of zeta-zeros, but is very weak if $\vartheta = 1$.

In our works \cite{Pin1980}, \cite{Pin2017} we succeeded to establish a sharp connection in the above direction, valid for all $\vartheta \geq 1/2$ (although the difficult part was the case $\vartheta = 1$).
In order to formulate the results of \cite{Pin2017} we introduce some notation, which -- or their analogues -- will be used in our present theorems as well.


Let us associate the zero-free domain of $\zeta(s)$ with the simple real function $(x > 1)$
\beq
\label{eq:2.1}
\omega(x) = \min_{\varrho; \gamma > 0} (\delta \log x + \log \gamma)
\eeq
where we denote the non-trivial zeta-zeros by
\beq
\label{eq:2.2}
\varrho = \beta + i \gamma = 1 - \delta + i \gamma.
\eeq
Our main result (with the notation \eqref{eq:1.17}) is the relation
\beq
\label{eq:1.23}
\log \frac{x}{S(x)} \sim \log \frac{x}{D(x)} \sim \omega(x) \ \ \text{ as }\ x \to \infty .
\eeq
The heuristic meaning of \eqref{eq:1.23} is that $S(x)$ and $D(x)$ have approximately the same order of magnitude as
\beq
\label{eq:1.24}
Z(x) = \max_{\varrho; \gamma > 0} \frac{x^\beta}{\gamma} = \frac{x}{e^{\omega(x)}}
\eeq
which is essentially (apart from the factor $|\varrho| / \gamma$) the modulus of the largest term in the following explicit formula of $\Delta(x)$:
\beq
\label{eq:2.5}
\Delta(x) = - \sum_{\substack{\varrho\\ |\gamma| \leq x}} \frac{x^\varrho}{\varrho} + O (\log^2 x),
\eeq
the Riemann-von Mangoldt formula \eqref{eq:1.2} with $T=x$.
In order to complete \eqref{eq:1.23} we defined further
\beq
\label{eq:1.26}
W(x) = \sum_{\substack{\varrho\\ |\gamma| \leq x}} \frac{x^\beta}{|\gamma|}, \ \ \ \omega_W(x) = \log \frac{x}{W(x)}
\eeq
and analogously let $\omega_D(x)$, $\omega_S(x)$ defined by
\beq
\label{eq:1.27}
\omega_D(x) := \log \frac{x}{D(x)},
\eeq
\beq
\label{eq:1.28}
\omega_S(x) := \log \frac{x}{S(x)}.
\eeq

We showed in \cite{Pin2017}

\medskip
\noindent
{\bf Theorem B.}
{\it With the above notation we have
\beq
\label{eq:1.28}
\omega_W(x) \sim \omega_S(x) \sim \omega_D(x) \sim \omega(x)\ \ \text{ as } \ x \to \infty.
\eeq}

Since $\omega(x)/\log x = \min\limits_{\varrho;\gamma > 0} (\delta + \log \gamma/\log x) \to 1 - \vartheta$ as $x \to \infty$ we obtain from Theorem B also $1 - \omega_A(x)/\log x \to \vartheta$ for $A(x) = W(x)$, $S(x)$ and $D(x)$.
Since $\vartheta > 0$ we can formulate this as

\medskip
\noindent
{\bf Theorem C.}
{\it With the above notation we have
\beq
\label{eq:1.29}
\log W(x) \sim \log S(x) \sim \log D(x) \sim \log Z(x) \sim  \vartheta \log x.
\eeq}

\medskip
Theorem B is a far-reaching generalization and sharpening of the results of Ingham \cite{Ing1932} about upper bounds for the error term and that of Tur\'an \cite{Tur1950} about oscillation of the error term as detailed in Theorems 6, 7 and Corollary 2 of our work \cite{Pin2017}.

The meaning of Theorems B and C is that the maximal and average order of the error term coincides with high accuracy with the maximal term of the explicit formula and with the sum of all terms in the explicit formula.
In order to formulate the results exactly we do not need the use of zero-free regions (and their properties), it is sufficient to use the simple functions $W(x)$ or $Z(x)$ (see \eqref{eq:1.24} and \eqref{eq:1.26}) in place of the Riemann--Von Mangoldt formula \eqref{eq:1.2}.

\section{Partial sums of the M\"obius function}
\label{sec:2}

Our present goal is to prove analogous sharp results for the partial sums
\beq
\label{2.1}
M(x) = \sum_{n \leq x} \mu(n)
\eeq
in place of the error term $\Delta(x)$ of the prime number theorem.
However, the case of $M(x)$ is much more difficult, since, instead of $\zeta'(s)/\zeta(s)$ we have to work with $1/\zeta(s)$, due to
\beq
\label{eq:2.2}
M(x) = \frac1{2\pi i} \int\limits_{(2)} \frac{x^s}{s\zeta(s)} ds.
\eeq
Thus, even supposing that all zeros are simple we have instead of \eqref{eq:1.2} the much more uncomfortable singularities
\beq
\label{eq:2.3}
\frac{x^\varrho}{\varrho \zeta'(\varrho)}
\eeq
even if we forget about the convergence of the series with terms of the form \eqref{eq:2.3}.

The extra difficulties compared with the oscillation of the error term could be seen from the fact that the best upper bound under the RH is
\beq
\label{eq:2.4}
M(x) = \sqrt{x} \exp \bigl((\log x)^{1/2} \log_2^{14} x \bigr)
\eeq
due to Soundararajan \cite{Sou2009}.
This improved the earlier exponent $39/61$ of $\log x$ proved by H. Maier and H. L. Montgomery \cite{MM2009}.
The classical result of Littlewood \cite{Lit1912} was just $M(x) = O\bigl(x^{1/2 + \varepsilon}\bigr)$ on RH.
Concerning the oscillation of $M(x)$, Mertens conjectured
\beq
\label{eq:2.5}
|M(x)| \leq \sqrt{x}
\eeq
which was disproved only in 1985 by Odlyzko and te Riele \cite{OR1985} using among others large scale computations concerning zeta-zeros.

We even do not know today whether $|M(x)| \leq 2\sqrt{x}$ for all $x$ and the weaker analogue of Littlewood's result \eqref{eq:1.18},
\beq
\label{eq:2.6}
\limsup_{x \to \infty} \frac{M(x)}{\sqrt{x}} = \infty,
\eeq
seems to be hopeless. (Ingham \cite{Ing1942} proved it supposing RH and linear independence of the positive imaginary parts of zeta-zeros over the rationals.)
We remark that if the RH is false, i.e., $\vartheta > 1/2$, then
\beq
\label{eq:2.7}
M(x) = \Omega\bigl(x^{\vartheta - \varepsilon}\bigr)
\eeq
follows from the mentioned theorem of Phragm\'en \cite{Phr1891}.

We mention further that the first result showing
\beq
\label{eq:2.8}
D_M(Y) := \frac{1}{Y} \int\limits_1^Y |M(x)|dx \longrightarrow \infty \ \ \text{ as }\ Y \to \infty
\eeq
was proved by the author \cite{Pin1982/83} with the estimate that for any $\varrho_0 > 0$
\beq
\label{eq:2.9}
D_M(Y) \geq \frac{Y^{\beta_0}}{6|\gamma_0|^3} \ \ \text{ for }\ Y > Y_0 = e^{|\gamma_0| + 4}.
\eeq

After this estimate it is relatively easy to show the analogue of the weaker Theorem C (see \eqref{eq:1.29}).
However, using \eqref{eq:2.9} we cannot prove the complete analogue of Theorem 3 (see \eqref{eq:1.28}) in the critical case $\vartheta = 1$.

In order to formulate our present results we repeat the definitions \eqref{eq:1.26}, \eqref{eq:1.24} and \eqref{eq:2.8} of $W(x), \omega_W(x), Z(x), \omega_Z(x)$ and $D_M(x)$, and define also $\omega_{D_M}(x)$ as follows.
\beq
\label{eq:2.10}
W(x) := \sum_{\varrho; |\gamma| \leq x} \frac{x^\beta}{\gamma}, \ \ \ \omega_W(x) := \log \frac{x}{W(x)},
\eeq
\beq
\label{eq:2.11}
Z(x) := \max_{\varrho; \gamma > 0} \frac{x^\beta}{\gamma}, \ \ \ \omega(x) := \log \frac{x}{Z(x)}.
\eeq
\beq
\label{eq:2.12}
D_M(x) := \frac{1}{x} \int\limits_0^x |M(u)|du, \ \ \ \omega_{D_M}(x) = \log \frac{x}{D_M(x)}.
\eeq

To complete the picture we will further investigate the maximum order of $M(x)$, that is,
\beq
\label{eq:2.13}
S_{M, \delta}(x) = \max_{x^{1 - \delta} \leq u \leq x} |M(u)|,\ \ \omega_{S_{M,\delta}}(x) = \log \frac{x}{S_{M, \delta}(x)}
\eeq
with an arbitrary fixed small $\delta > 0$,
and show (analogously to the case of the error term $\Delta(x)$ of the PNT, see Section \ref{sec:1})
that it coincides with high accuracy with the average order $D_M(x)$ and also with the analogous quantities $D(x)$ and $S(x)$ defined by the error term $\Delta(x)$.

We will prove the analogues of Theorem B and C as follows.

\begin{theorem}
\label{th:1}
We have for any fixed $\delta > 0$
\beq
\label{eq:2.14}
\omega_W(x) \sim \omega_{D_M} (x) \sim \omega_{S_{M, \delta}}(x) \sim \omega(x) \ \text{ as }\ x \to \infty.
\eeq
\end{theorem}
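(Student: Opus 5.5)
The relation $\omega_W(x)\sim\omega(x)$ involves only zeros, not $M$. It is the same statement as in Theorem B (see \cite{Pin2017}), up to replacing $|\gamma|$ by $\gamma$ in \eqref{eq:2.10}, which is harmless by symmetry. So it remains to prove
\[
\omega(x)(1+o(1))\le \omega_{S_{M,\delta}}(x)\le \omega_{D_M}(x)+O(1)\quad\text{and}\quad \omega_{D_M}(x)\le \omega(x)(1+o(1)).
\]

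\emph{Middle inequality.} Since $|M(u)|\le u$, the range $u\le x^{1-\delta}$ contributes at most $x^{1-\delta}$ to $\int_0^x|M|$. Hence
\[
D_M(x)\le S_{M,\delta}(x)+x^{-\delta}.
\]
The term $x^{-\delta}$ is negligible, because $D_M(x)\to\infty$ by \eqref{eq:2.9}. This gives $\omega_{S_{M,\delta}}\le\omega_{D_M}+o(1)$.

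\emph{Upper bound for $M$ (analogue of Ingham's Theorem A / \eqref{eq:1.16}).} We must show $S_{M,\delta}(x)\le x e^{-(1-\ve)\omega(x)}$ for $x>x_0(\ve)$.

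First, every zero satisfies $\delta_\varrho\log x+\log\gamma\ge\omega(x)$. This yields a zero-free region for $\zeta$ to the right of the curve $\sigma=1-(\omega(x)-\log t)/\log x$, for $t<e^{\omega(x)}$.

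Second, we need $1/\zeta(s)$ to be small slightly inside that region. Take the Borel–Carathéodory / Hadamard three-circles argument applied to $\log\zeta$ on discs avoiding zeros, shifting the abscissa by $\ve\,\omega(x)/\log x$. This gives $\log|1/\zeta(s)|\ll \ve\,\omega(x)$ or $\ll\log t$ in the shrunken region.

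Third, shift the contour in Perron's formula \eqref{eq:2.2}, truncated at height $T=e^{(1-\ve/2)\omega(x)}$, to this contour. The integrand is then bounded by $x^{\sigma}|1/\zeta|/|s|\le x e^{-(1-\ve)\omega(x)}$.

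Because $\omega(x)\le\log x$ and $\omega(x)/\log x\to1-\vartheta$, the case $\vartheta<1$ is easy: one can simply use $M(x)\ll x^{\vartheta+\ve}$, which holds by Littlewood's argument. The genuine work is the case $\vartheta=1$, where $\omega(x)=o(\log x)$ and the loss $|1/\zeta|\le e^{\ve\omega(x)}$ must be controlled. For this I would use the Pintz-style method of \cite{Pin1980}: average $1/\zeta$ with a smoothing kernel, and apply density estimates \eqref{eq:1.6} to bound the number of zeros near the contour.

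\emph{Lower bound $\omega_{D_M}(x)\le(1+o(1))\omega(x)$: the main obstacle.} We need $D_M(x)\ge x e^{-(1+\ve)\omega(x)}$. Estimate \eqref{eq:2.9} only gives $Y^{\beta_0}/\gamma_0^3$. Choosing $\varrho_0$ extremal in \eqref{eq:2.11} gives $\log(x/D_M)\le\omega(x)+2\log\gamma_0$, but $\log\gamma_0$ is comparable to $\omega(x)$ when $\vartheta=1$. So the cube must be reduced to $\gamma_0^{1+\ve}$, and the restriction $Y>e^{\gamma_0}$ must be removed, since we need $\log\gamma_0\le\omega(x)$.

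My plan here:
\begin{itemize}
\item Consider the Mellin transform $\int_1^\infty M(u)u^{-s-1}du=1/(s\zeta(s))$, and integrate $M(u)$ against a kernel $u^{-1}k(\log u-\log x)$. This represents $1/(s\zeta(s))$ convolved with $\hat k$ along a vertical line, picking up the pole at $\varrho_0$.
\item Turán's power-sum method (second main theorem) should give a lower bound for $\bigl|\int_{x^{1-\delta}}^{x}M(u)u^{-1}k\,du\bigr|$ of size $x^{\beta_0}\gamma_0^{-1-\ve}$. Here the kernel is a power $e^{-\lambda\log^2(u/x)}$ localized near frequency $\gamma_0$.
\item Use that $\varrho_0$ minimizes $\delta\log x+\log\gamma$ to control the other poles: each competing zero contributes at most $x^{\beta}/\gamma\le Z(x)$. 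Their number is controlled by $N(T)\ll T\log T$, which costs only $e^{o(\omega)}$ factors after localization.
\item Multiplicities of $\varrho_0$ and the residue size $1/\zeta'(\varrho_0)$ are the real danger. I would handle them by using the residue of $1/\zeta$ at the whole cluster of zeros within distance $1/\log\gamma_0$, bounded below by a Cauchy-integral argument on a small circle where $|\zeta|\le\gamma_0^{C}$. Power-sum lower bounds are insensitive to such coefficient sizes up to $e^{O(\log\gamma_0/\log x\cdot\ldots)}$.
\end{itemize}
Since the kernel is concentrated in $[x^{1-\delta},x]$, this bounds $S_{M,\delta}$ from below, and by $L^1$ bounds it also bounds $D_M$ from below.

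Combining the three parts gives $\omega\lesssim\omega_S\le\omega_{D_M}+o(1)\lesssim\omega$, proving \eqref{eq:2.14}.
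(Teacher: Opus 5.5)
\textbf{Lower bound for $\vartheta=1$: this is a genuine gap.} Your reduction of $\omega_W\sim\omega$ to Theorem B agrees with the paper. (Read $W$ with $|\gamma|$, though; with $\gamma$ the sum vanishes by symmetry.) The lower bound $\omega_{D_M}\le(1+o(1))\omega$ is where your plan breaks down.

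\begin{itemize}
\item A power-sum argument needs controlled coefficients. Here the coefficients attached to zeros are the residues $x^\varrho/(\varrho\zeta'(\varrho))$, or Laurent coefficients at multiple zeros, and nothing is known about their size.
\item Your claim that each competing zero contributes at most $x^\beta/\gamma\le Z(x)$ silently drops the factor $1/\zeta'(\varrho)$.
\item Your Cauchy-integral lower bound for the cluster residue at $\varrho_0$ fails. A pointwise bound $|1/\zeta|\ge\gamma_0^{-C}$ on a circle gives no lower bound for $\oint ds/\zeta(s)$; for instance $\oint ds/((s-a)(s-b))=0$.
\item Tur\'an's theorems are proportional to $\min_j|b_1+\dots+b_j|$, so they are not insensitive to coefficient sizes. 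Power-sum arguments also tend to lose constants in the exponent (cf.\ Tur\'an--Sta\'s), whereas the constant here must be $1$.
\end{itemize}

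The missing idea is Theorem~\ref{th:3}. Integrate $M(x)x^{-1-\varrho_0+\kappa}$ against $r_\lambda(\lambda-\log x)$, where the kernel is built from $g$ in \eqref{eq:6.16}, which contains the factor $\zeta(s+\varrho_0-\kappa)$. This factor cancels $1/\zeta$ exactly, so the weighted integral equals an explicit main term $\asymp Y^\kappa$ coming from the pole at $s=\kappa$. Meanwhile $g$ itself is regular at $s=\kappa$ because $\zeta(\varrho_0)=0$, whatever the multiplicity. Hence $r_\lambda$ can be bounded on $\sigma=0$ by Korobov--Vinogradov \eqref{eq:3.6}. This loses only $\gamma_0^{C(\eta_0+\kappa)^{3/2}}(\log\gamma_0Y)^C$ relative to $x^{\beta_0}/\gamma_0$, and no residue of $1/\zeta$ ever appears.

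Even so, that bound is only for a weighted integral over $[1,Ye^3]$. Your closing step, that concentration of the kernel in $[x^{1-\delta},x]$ bounds $S_{M,\delta}$ and $D_M$, hides a real problem. Converting $\int|M(u)|u^{-1}k\,du$ into $\int|M(u)|\,du$ can lose up to $x^{\delta}$, which is fatal since $\omega(x)=o(\log x)$. The paper first discards $[1,Y^{1-\varepsilon'}]$ (Corollary~\ref{cor:3}). It then needs the \emph{upper} bound \eqref{eq:5.10}, together with the extremal argument, to move the mass near $Y$. So your two halves are not independent.

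\textbf{Upper bound for $\vartheta=1$: the contour step fails as stated.} Along $\sigma(t)=1-(1-\varepsilon)(\omega(x)-\log t)/\log x$ one has
\[
x^{\sigma(t)}/t=xe^{-(1-\varepsilon)\omega(x)}t^{-\varepsilon}.
\]
Even with a perfect pointwise bound for $1/\zeta$, the integral up to $T=e^{(1-\varepsilon/2)\omega(x)}$ is therefore about $xe^{-(1-\varepsilon)\omega(x)}T^{1-\varepsilon}=xe^{-(\varepsilon/2)(1-\varepsilon)\omega(x)}$. That is useless. The real obstacle is the \emph{length} of contour near $\sigma=1$, not the loss in $|1/\zeta|$.

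The paper handles this as follows.
\begin{itemize}
\item The contour follows the locally rightmost zero in blocks of height $40\log^2T$ (the Huxley--Hooley--Ramachandra contour), shifted right by $\varepsilon(1-\beta)$.
\item On it, $1/\zeta$ is bounded via Lemma~\ref{lem:4.2}.
\item Carlson's theorem \eqref{eq:1.6} shows that in $[T^*,2T^*]$ the contour reaches $\sigma\ge 1-\eta$ only on length $\ll (T^*)^{4\eta+\varepsilon}$.
\item Each such piece then costs about $x^{1-(1-\varepsilon)\eta}\gamma^{-1+O(\varepsilon)}$ for an actual zero, which gives \eqref{eq:5.10}.
\end{itemize}
Your mention of density estimates points in this direction, but the mechanism is missing.

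\textbf{Middle inequality: a minor fixable slip.} The range $u\le x^{1-\delta}$ contributes up to $x^{1-2\delta}$ to $D_M(x)$, not $x^{-\delta}$. Saying $D_M\to\infty$ does not make this negligible. It is negligible for $\vartheta=1$ because $D_M(x)=x^{1-o(1)}$. For $\vartheta<1$ you need $M(u)\ll u^{\vartheta+\varepsilon}$ in place of $|M(u)|\le u$.
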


\begin{theorem}
\label{th:2}
We have for any fixed $\delta > 0$
\beq
\label{eq:2.15}
\log W(x) \sim \log D_M (x) \sim \log S_{M, \delta}(x) \sim \log Z(x).
\eeq
\end{theorem}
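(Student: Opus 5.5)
Theorem~\ref{th:2} follows from Theorem~\ref{th:1}, in the same way that Theorem~C follows from Theorem~B. We have $\log A(x) = \log x - \omega_A(x)$ for each of $A = W, D_M, S_{M,\delta}, Z$. Moreover $\omega(x)/\log x \to 1-\vartheta$ and $\vartheta \ge 1/2 > 0$, so $\log Z(x) \sim \vartheta\log x$.

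Now suppose $\omega_A(x) = \omega(x)(1+o(1))$. Then
\[
\log A(x) = \log Z(x) + o(\omega(x)) = \log Z(x) + o(\log x).
\]
Since $\log Z(x) \gg \log x$, this gives $\log A(x) \sim \log Z(x)$. This is the whole plan: Theorem~\ref{th:1} is assumed, and each of the relations in \eqref{eq:2.15} is obtained by this computation with the appropriate choice of $A$.

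The one point to watch is the use of $\omega(x) = O(\log x)$, which holds because $\omega(x) \le \delta_0\log x + \log\gamma_0$ for the first zero $\varrho_0 = 1-\delta_0+i\gamma_0$. Combined with $\vartheta > 0$, this is what converts the $o(\omega(x))$ error into an $o(\log Z(x))$ error.

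For $W$ an extra remark is needed, because the sum in \eqref{eq:2.10} runs over $|\gamma|\le x$ with $\gamma$ rather than $|\gamma|$ in the denominator. I read it as taken over $\gamma>0$, or equivalently with $|\gamma|$; this changes $W$ by at most a factor $2$, which is harmless on the logarithmic scale.

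If one wanted Theorem~\ref{th:2} independently of the hard case $\vartheta=1$ of Theorem~\ref{th:1}, the route would be direct bounds, relying on $\vartheta>0$:
\begin{itemize}
\item $Z(x)\le W(x)\le x^{\vartheta}\log^2 x$, using $N(T)\ll T\log T$.
\item $D_M(x)\le S_{M,\delta}(x) + O(x^{1-\delta})$. Then $S_{M,\delta}(x)\ll x^{\vartheta+\varepsilon}$ by Littlewood-type arguments, via Perron's formula for \eqref{eq:2.2} and bounds for $1/\zeta$ to the right of $\vartheta$.
\item The lower bound $D_M(x)\gg x^{\vartheta-\varepsilon}$ from \eqref{eq:2.9}, applied with a zero satisfying $\beta_0>\vartheta-\varepsilon$.
\end{itemize}
Together these give all logarithms as $(\vartheta+o(1))\log x$.

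The main obstacle in this direct route is the $x^{1-\delta}$ error term when $\vartheta$ is small relative to $1-\delta$. Since $\delta$ is arbitrary but fixed, this must be handled by noting that $S_{M,\delta}(x)\ge |M(u)|$ for some $u\in[x^{1-\delta},x]$ with $|M(u)|$ as large as $D_M(x)$ over that range. Deducing everything from Theorem~\ref{th:1}, as above, avoids this issue.
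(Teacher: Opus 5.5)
Your proposal is correct and follows essentially the paper's route: Theorem~\ref{th:2} is the analogue of Theorem~C and follows from Theorem~\ref{th:1} using $\log A(x)=\log x-\omega_A(x)$, $\omega(x)/\log x\to 1-\vartheta$ and $\vartheta\ge 1/2$, exactly as the paper obtains Theorem~C from Theorem~B (and your reading of the definition of $W$ with $|\gamma|$ is the intended one). In your optional direct route, the lower bound for $S_{M,\delta}$ is best obtained by using $|M(u)|\ll u^{\vartheta+\varepsilon}$ to show that $x^{-1}\int_0^{x^{1-\delta}}|M(u)|\,du\ll x^{(1-\delta)(\vartheta+\varepsilon)-\delta}=o(x^{\vartheta-\varepsilon})$ for small $\varepsilon$, rather than by the vague remark in your last paragraph.
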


\begin{corollary}
\label{cor:1}
We have for any $\delta > 0$
\beq
\label{eq:2.16}
\omega_{D_M}(x) \sim \omega_{S_{M, \delta}}(x) \sim \omega_D(x) \sim \omega_S(x).
\eeq
\end{corollary}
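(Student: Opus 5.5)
Corollary \ref{cor:1} is a formal consequence of Theorem \ref{th:1} combined with Theorem B; no new analytic input is needed. The plan is to express all four quantities in terms of the single function $\omega(x)$ defined in \eqref{eq:2.11}, which depends only on the zeros of $\zeta(s)$. It coincides with the function $\omega(x)$ of \eqref{eq:2.1}, since $x^\beta/\gamma = x/\exp(\delta\log x+\log\gamma)$ with $\delta = 1-\beta$. Hence the function $\omega$ appearing in Theorem B and in Theorem \ref{th:1} is literally the same function. I first record this identification explicitly.

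Next I invoke Theorem \ref{th:1} for the fixed $\delta>0$ of the corollary. It gives $\omega_{D_M}(x)\sim\omega(x)$ and $\omega_{S_{M,\delta}}(x)\sim\omega(x)$ as $x\to\infty$. Then I invoke Theorem B, \eqref{eq:1.28}, which gives $\omega_D(x)\sim\omega(x)$ and $\omega_S(x)\sim\omega(x)$, where $\omega_D$ and $\omega_S$ are formed from $\Delta(x)$ via \eqref{eq:1.17}, \eqref{eq:1.27} and \eqref{eq:1.28}.

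Finally I conclude by transitivity of asymptotic equivalence. This requires $\omega(x)$ to be positive for large $x$ (indeed $\omega(x)\to\infty$), so that quotients such as $\omega_{D_M}(x)/\omega_D(x)=(\omega_{D_M}(x)/\omega(x))(\omega(x)/\omega_D(x))$ are well defined and tend to $1$. Positivity and growth hold because there are no zeros with $\beta=1$ and the zeros have bounded real parts and $\gamma$ bounded below. Consequently, for each zero $\delta\log x+\log\gamma\to\infty$, and this is uniform over zeros: those with large $\gamma$ are handled by $\log\gamma$, and the finitely many below any height by $\delta\log x$.

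The only point needing care, rather than a genuine obstacle, is this uniformity of $\omega(x)\to\infty$ together with the consistency of the notation between Sections \ref{sec:1} and \ref{sec:2}. The real difficulty lies entirely in Theorem \ref{th:1}, which the corollary takes as given.
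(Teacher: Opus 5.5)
Your proposal is correct and is exactly the intended argument: the paper gives no separate proof, since the corollary is just Theorem~\ref{th:1} ($\omega_{D_M}\sim\omega_{S_{M,\delta}}\sim\omega$) chained with Theorem B ($\omega_D\sim\omega_S\sim\omega$), and the two functions $\omega$ coincide by \eqref{eq:2.1} and \eqref{eq:2.11}. Your check that $\omega(x)>0$ and $\omega(x)\to\infty$ is harmless but already implicit in the statements being combined; the paper in fact has $\omega(x)\gg\sqrt{\log x}$, cf.\ \eqref{eq:6.12}.
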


\begin{corollary}
\label{cor:2}
We have for any $\delta > 0$
\beq
\label{eq:2.17}
\log D_M(x) \sim \log S_{M, \delta}(x) \sim \log D(x) \sim \log S(x).
\eeq
\end{corollary}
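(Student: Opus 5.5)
The statement to prove is Corollary~\ref{cor:2}: $\log D_M(x)\sim\log S_{M,\delta}(x)\sim\log D(x)\sim\log S(x)$. I would deduce it by combining Theorem~\ref{th:2} with Theorem~C. Theorem~\ref{th:2} gives
\[
\log D_M(x)\sim \log S_{M,\delta}(x)\sim \log Z(x),
\]
and Theorem~C gives
\[
\log D(x)\sim \log S(x)\sim \log Z(x)\sim \vartheta\log x .
\]
The first step is to check that both theorems use the same $Z(x)$. The definition \eqref{eq:2.11} repeats \eqref{eq:1.24} verbatim: in both, $Z(x)=\max_{\gamma>0}x^\beta/\gamma$. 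Hence the two chains share the common term $\log Z(x)$.

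The second step is to invoke transitivity of asymptotic equivalence. The relation $f\sim g$ means $f/g\to1$, and this is transitive provided the functions involved are eventually nonzero. Here $\log Z(x)\sim\vartheta\log x$ with $\vartheta\ge 1/2>0$, so $\log Z(x)\to\infty$. Consequently every quantity in both chains is positive and tends to infinity for large $x$, and no division-by-zero issue arises. Chaining through $\log Z(x)$ then gives all four equivalences in \eqref{eq:2.17}.

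I do not expect a genuine obstacle at the level of the corollary; all the difficulty lies in Theorem~\ref{th:2}, which is assumed. The only care needed is in the step above, where the common quantity $\log Z(x)$ must be nondegenerate. That follows from $\vartheta\geq1/2$, which holds because there are zeros on the critical line, or simply by the symmetry $\beta\mapsto 1-\beta$ of the zeros.

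The same argument proves Corollary~\ref{cor:1}, with one extra observation. There the chain is $\omega_{D_M}\sim\omega_{S_{M,\delta}}\sim\omega$ from Theorem~\ref{th:1}, together with $\omega_D\sim\omega_S\sim\omega$ from Theorem~B. The common quantity is $\omega(x)$, which must be eventually positive for transitivity to apply. It is: $\omega(x)\ge \log\gamma_{\min}>0$, where $\gamma_{\min}\approx14.13$ is the least positive ordinate, since $\delta\ge0$ in \eqref{eq:2.1}.
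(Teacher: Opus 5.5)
Your proposal is correct and is the argument the paper intends: it gives no separate proof of Corollary 2 and treats it as immediate from Theorem 2 and Theorem C, linked through the common quantity $\log Z(x)$. Your added check that $\log Z(x)\to\infty$, because $\vartheta\ge 1/2$, is the right justification for chaining the asymptotic equivalences.
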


Corollaries \ref{cor:1}--\ref{cor:2} show that the average and the maximal order of $|M(x)|$
and $|\Delta(x)|$ are both very close to each other and are also equal with high accuracy to the maximal term in the remainder term of the PNT.

\section{The oscillation of $M(x)$. The lower bound for $\vartheta < 1$}
\label{sec:3}

The lower bound \eqref{eq:2.9} is not strong enough for us in general.
However, it establishes for us the lower estimation of $D_M(x)$ in case $\vartheta < 1$.

Let us consider a series of zeros $\varrho_n' = \beta_n' + i\gamma_n' = 1 - \eta_n' + i\gamma_n'$
with
\beq
\label{eq:3.1}
\beta_n' \geq \vartheta - \frac{\varepsilon}{2} \qquad (n = 0, 1, 2, \ldots) \qquad \gamma_n' > 0.
\eeq
For $x > e^{\gamma_n' + 4} \geq (\gamma_n')^{6/\varepsilon}$ we have by \eqref{eq:2.9}
\beq
\label{eq:3.2}
D_M(x) \gg \frac{x^{\beta_n'}}{(\gamma_n')^3} \gg x^{\vartheta - \varepsilon}.
\eeq

Since by $\eta \geq 1 - \vartheta$ we have trivially
\beq
\label{eq:3.3}
\omega(x) = \min_\varrho (\eta \log x + \log \gamma) \geq (1 - \vartheta) \log x,
\eeq
the inequality \eqref{eq:3.2} implies
\begin{align}
\label{eq:3.4}
\omega_{D_M}(x) = \log \frac{x}{D_M(x)} &\leq (1 \! -\! \vartheta + \varepsilon) \log x = (1\! -\! \vartheta) \left(\! 1 + \frac{\varepsilon}{1\! -\! \vartheta}\!\right)\! \log x\\
&\leq \left(1 + \frac{\varepsilon}{1 - \vartheta}\right) \omega(x)
\nonumber
\end{align}
for $x > x_0(\varepsilon, \vartheta)$.
Now \eqref{eq:3.2}--\eqref{eq:3.4} settle the upper bound for $\omega_{D_M}(x)$, i.e., the lower bound for $D_M(x)$ if $\vartheta < 1$.

So, in the rest of this paper we can assume $\vartheta = 1$.
In order to deal with this case, \eqref{eq:2.9}, our theorem from \cite{Pin1982/83} is not sufficient but we can use the method of its proof to reach a much stronger result for zeros with real parts near to $1$.

However, we have to use some strong estimate for $\zeta(s)$ near the boundary line $\sigma = 1$,
\beq
\label{eq:3.5}
\zeta(1 - \eta + it) \ll |t|^{o(\eta)} \log^C(|t| + 2) \ \ \text{ if } \ \eta \to 0.
\eeq

We will use the theorem of Korobov--Vinogradov (for an explicit form see e.g. \cite{For2002})
\beq
\label{eq:3.6}
\zeta(1 - \eta + it) \ll |t|^{C\eta^{3/2}} \log^C(|t| + 2).
\eeq
These types of estimates will enable us to overcome the difficulties of the lower estimate in case of $\vartheta = 1$.

\section{Upper bounds for $M(x)$. Preparation and lemmas}
\label{sec:4}

In this section we will prove two lemmas which give upper bounds for $|1/\zeta(s)|$ in the critical strip on the right from an ``extreme right'' lying zero of $\zeta(s)$.
The first one refers to the case $\vartheta < 1$ and the proof is almost identical to the proof of Theorem 14.2 of Titchmarsh \cite{Tit1951}, valid under the Riemann Hypothesis.
So we completely omit the proof which is based on the Borel--Carath\'eodory theorem and Hadamard's three-circle theorem.

\begin{lemma}
\label{lem:4.1}
\beq
\label{eq:4.1}
\log \zeta(s) = O\left(\log_2 t(\log t)^{(1 - \sigma)/(1 - \vartheta)}\right)
\eeq
\end{lemma}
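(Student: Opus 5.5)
We follow Titchmarsh's proof of Theorem 14.2 in \cite{Tit1951}, with the critical line $\sigma = 1/2$ replaced by $\sigma = \vartheta$. The intended range is $\vartheta < 1$ and $\vartheta + \varepsilon_0 \le \sigma \le 1 + \varepsilon_0$ with $t \geq t_0$. For $\sigma \ge 1+\varepsilon_0$ the statement is trivial, since there $\log\zeta(s) = O(1)$.

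\textbf{Step 1: an upper bound for $\log\zeta$ off the zeros.} Since $\zeta(s) \neq 0$ for $\sigma > \vartheta$, the function $\log\zeta(s)$ is holomorphic in the half-plane $\sigma > \vartheta$, cut along the segment towards the pole. The convexity bound $\zeta(s) \ll t^{A}$ for $\sigma \geq \vartheta$ gives
\[
\operatorname{Re}\log\zeta(s) = \log|\zeta(s)| \le A\log t .
\]
We apply the Borel--Carath\'eodory theorem to $\log\zeta(z)$ on circles centred at $2+it$ with radii $R = 2-\vartheta-\delta_1$ and $r = 2-\vartheta-2\delta_1$. The value $\log\zeta(2+it)$ is bounded, so this yields
\[
|\log\zeta(s)| \ll \delta_1^{-1}\log t \qquad \text{for } \sigma \ge \vartheta + 2\delta_1 .
\]
The constant is only $O(1/\delta_1)$ because the ratio $R/(R-r)$ is of order $1/\delta_1$.

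\textbf{Step 2: three-circle interpolation.} We use Hadamard's three-circle theorem on circles centred at $\sigma_1 + it$ with $\sigma_1 = 1+\eta$, $\eta = 1/\log_2 t$, of radii
\[
r_1 = \sigma_1 - 1 - \eta', \qquad r_2 = \sigma_1 - \sigma, \qquad r_3 = \sigma_1 - \vartheta - \delta_1 .
\]
On the smallest circle $\sigma > 1$, so the classical estimate $\log\zeta \ll \log(1/(\sigma-1))$ gives $M_1 \ll \log_2 t$. On the largest circle Step 1 gives $M_3 \ll \delta_1^{-1}\log t$. The three-circle theorem then gives
\[
M_2 \le M_1^{1-a} M_3^{a}, \qquad a = \frac{\log(r_2/r_1)}{\log(r_3/r_1)} .
\]
Since $r_1$ is tiny, of order $1/\log_2 t$, this exponent is essentially $(1-\sigma)/(1-\vartheta)$ up to $o(1)$-type corrections. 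This is exactly Titchmarsh's computation with $1/2$ replaced by $1-\vartheta$.

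\textbf{Step 3: the main obstacle.} The delicate point is the bookkeeping of parameters, which decides whether the exponent of $\log t$ comes out as exactly $(1-\sigma)/(1-\vartheta)$ and the loss is only the factor $\log_2 t$. The factor $1/\delta_1$ and the error in $a$ coming from $\log(r_2/r_1)$ versus linear interpolation must both be absorbed. First, we choose $\delta_1 = 1/\log_2 t$, so that $M_3 \ll \log t\,\log_2 t$ and $M_3^{a}$ costs at most a bounded power of $\log_2 t$. Second, we expand $a$ as
\[
a = \frac{1-\sigma}{1-\vartheta} + O\!\left(\frac{1}{\log_2 t}\right),
\]
so that $(\log t)^{O(1/\log_2 t)} = O(1)$. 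If this exponent loss is not absorbed cleanly, we fall back on Titchmarsh's alternative route. There one first bounds $\operatorname{Re}\log\zeta$ from above using the Borel--Carath\'eodory lemma and then interpolates $\log|\zeta|$ through the Phragm\'en--Lindel\"of principle for the harmonic function $\operatorname{Re}\log\zeta$. This gives a bound of exactly the same shape and is the version suited to the later bound for $1/\zeta$.
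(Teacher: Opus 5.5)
Your outline is the one the paper intends. The paper omits the proof and refers to Titchmarsh's Theorem 14.2: Borel--Carath\'eodory followed by Hadamard's three circles. Your Step 1 is fine.

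The gap is in Step 2. With the centre at $\sigma_1=1+1/\log_2 t$, the inner radius $r_1=\sigma_1-1-\eta'$ is at most $1/\log_2 t$. For any fixed $\sigma<1$ this gives
\[
a=\frac{\log(r_2/r_1)}{\log(r_3/r_1)}=1-\frac{\log(r_3/r_2)}{\log(r_3/r_1)}=1-O\Bigl(\frac{1}{\log_3 t}\Bigr),
\]
so $M_1^{1-a}M_3^{a}$ is $(\log t)^{1-o(1)}$, no better than the trivial bound. Your justification ``since $r_1$ is tiny, the exponent is essentially $(1-\sigma)/(1-\vartheta)$'' is backwards. The ratio $\log(1+u/R)/\log(1+v/R)$ is close to the linear ratio $u/v$ only when $R=r_1$ is \emph{large}, i.e.\ when the centre lies far to the right of $\sigma=1$; this is why Titchmarsh lets $\sigma_1$ be large. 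So the expansion $a=\frac{1-\sigma}{1-\vartheta}+O(1/\log_2 t)$ asserted in Step 3 is false for the circles you actually use, and nothing in Step 3 compensates.

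\textbf{Repair.} Take $R=\sigma_1-1-\eta'\ge\log_2 t$, e.g.\ $\sigma_1=2+\log_2 t$, and set $\eta'=\delta_1=1/(2\log_2 t)$, with $C_3$ passing through $\vartheta+\delta_1+it$.
\begin{itemize}
\item Run Borel--Carath\'eodory with radii $2-\vartheta-\delta_1/2$ and $2-\vartheta-\delta_1$, so it covers $\mathrm{Re}\,z\ge\vartheta+\delta_1$, and apply it at every height in $[t-r_3,t+r_3]$. The circles stay far from the pole.
\item Then $r_3>r_2$ on the whole range $\sigma\ge\vartheta+1/\log_2 t$.
\item With $u=1+\eta'-\sigma$ and $v=1-\vartheta$,
\[
a=\frac{u}{v}\Bigl(1+O\Bigl(\frac1R\Bigr)\Bigr)=\frac{1-\sigma}{1-\vartheta}+O\Bigl(\frac1{\log_2 t}\Bigr),
\]
so $(\log t)^a\ll(\log t)^{(1-\sigma)/(1-\vartheta)}$.
\item Together with $M_1\le\log\zeta(1+\eta')\ll\log_3 t$ and $M_3\ll\delta_1^{-1}\log t$, this gives $M_2\ll\log_2 t\,(\log t)^{(1-\sigma)/(1-\vartheta)}$.
\end{itemize}

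Your fallback would not rescue the argument. Phragm\'en--Lindel\"of for the harmonic function $\mathrm{Re}\log\zeta$ only interpolates the bounds linearly. That yields convexity-type estimates $\log|\zeta(s)|\ll\frac{1-\sigma}{1-\vartheta}\log t$, not a power $(\log t)^{(1-\sigma)/(1-\vartheta)}$. It is also one-sided, so it does not control $-\log|\zeta|$, which is what the later bound for $1/\zeta$ needs.
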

for
\beq
\label{eq:4.2}
\vartheta + (\log_2 t)^{-1} \leq \sigma \leq 1.
\eeq

This will settle completely the upper estimation of $M(x)$ for $\vartheta < 1$.
We can choose namely the way $J$ of integration until the height $|t| \leq x^2$ on the vertical line
$\text{\rm Re }s = \vartheta + \varepsilon$ and afterwards include the horizontal segments $|t| = x^2$,
$\vartheta + \varepsilon \leq \sigma \leq 2$.

In such a way we obtain from Lemma \ref{lem:4.1} and from the formula on p.\ 316 of \cite{Tit1951, \S 14.7}
\begin{align}
\label{eq:4.3}
M(x) &= \frac{1}{2\pi i} \int\limits_{2 - i x^2}^{2 + ix^2} \frac{x^s}{s\zeta(s)} ds + O(1)\\
&= \frac{1}{2\pi i} \int\limits_J \frac{x^s}{s\zeta (s)} ds + O(1) \nonumber\\
&\ll x^{\vartheta + \varepsilon + \varepsilon} \log x + O(1).\nonumber
\end{align}
This immediately implies
\beq
\label{eq:4.4}
S_{M,\delta}(x) \ll x^{\vartheta + \varepsilon}, \ \ \ D_M(x) \ll x^{\vartheta + \varepsilon},
\eeq
\beq
\label{eq:4.5}
\omega_{S_{M, \delta}}(x) \geq (1 - \vartheta - \varepsilon) \log x + O(1), \ \
\omega_{D_M}(x) \geq (1 - \vartheta - \varepsilon)\log x + O(1).
\eeq

Our next lemma will be used for the case $\vartheta = 1$.

\begin{lemma}
\label{lem:4.2}
Let $t_0 > C$ and suppose that for an $\varepsilon < \varepsilon_0$
\beq
\label{eq:4.6}
\zeta(\sigma + it) \neq 0 \ \ \text{ for } \ \sigma > \sigma_0 = 1 - \eta, \ |t - t_0| \leq \eta.
\eeq
We have then for $\sigma_1 \geq \sigma_0 + \varepsilon \eta$, $|t - t_0| \leq \eta$
\beq
\label{eq:4.7}
\bigl|\log \zeta (\sigma_1 + it_0)\bigr| \leq \eta \log \frac{1}{\varepsilon \eta}\log t_0 + \log \frac1{\varepsilon} \log_2 t_0.
\eeq
\end{lemma}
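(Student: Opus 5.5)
The plan is the Borel--Carath\'eodory plus three-circle argument, as in Titchmarsh \S 14.2, applied to discs centred to the right of $\sigma=1$ and reaching no further left than $\sigma_0=1-\eta$. The last step needs to be localised and iterated geometrically towards the boundary $\sigma_0$. Since $\zeta$ has no zeros in the rectangle $\sigma>1-\eta$, $|t-t_0|\le\eta$, and none in $\sigma>1$, we can define the branch of $\log\zeta(s)$ that is real for large $\sigma$.

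\textbf{Step 1 (Borel--Carath\'eodory on small discs).} Put $s_0=1+\eta+it_0$ and consider the disc of radius $R=2\eta$ about $s_0$ (shrunk by a constant factor if needed). Its left part lies in the zero-free rectangle, and its right part lies in $\sigma>1$. On this disc $\mathrm{Re}\log\zeta=\log|\zeta|\le C\log t_0$, by the classical bound $\zeta(\sigma+it)\ll\log t$ for $\sigma\ge1-1/\log t$, or polynomially in $t$ in general. At the centre we have $|\log\zeta(s_0)|\le\log\zeta(1+\eta)\le\log\frac{2}{\eta}$. Borel--Carath\'eodory then gives, on the concentric disc of radius $r<R$,
\[
|\log\zeta(s)|\le \frac{2r}{R-r}\,C\log t_0+\frac{R+r}{R-r}\log\frac{2}{\eta}.
\]

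\textbf{Step 2 (Hadamard three circles).} Choose circles about $s_0$ with radii $r_1=\eta/2$ (where $|\log\zeta|\le\log_2 t_0+O(1)$, since $\sigma\ge1+\eta/2$ and $|\log\zeta(\sigma+it)|\le\log\zeta(\sigma)$), $r_2=1+\eta-\sigma_1$, which passes through $\sigma_1+it_0$, and $r_3$ close to $2\eta$, leaving a margin of order $\varepsilon\eta$. Convexity of $\log\max|\log\zeta|$ in $\log r$ interpolates between the bound $\log_2 t_0$ on the small circle and the Borel--Carath\'eodory bound on the large circle. The latter is of size roughly $\frac{\eta}{\varepsilon\eta}\log t_0$ when the margin is $\varepsilon\eta$.

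\textbf{Main obstacle.} The factor $\eta\log\frac1{\varepsilon\eta}$ in front of $\log t_0$ suggests one should instead use the standard $\log t$-scaled version. Take the disc of fixed size (radius comparable to $1$, not $\eta$) centred at $1+1/\log_2 t_0+it_0$, where $|\log\zeta|\le\log_2t_0$. A single disc of this size does not stay inside a zero-free region that is only $\eta$ wide in $t$. So I would instead iterate: take a chain of discs with radii decreasing geometrically, $R_k=2^{-k}$, down to radius about $\varepsilon\eta$. Each step moves the centre left by a fixed fraction of the current radius, and the chain has about $\log\frac1{\varepsilon\eta}$ steps. At each step the Borel--Carath\'eodory bound costs $O(R_k\log t_0)$ in the real part at the distance we have moved into the region. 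Summing over the dyadic steps in the range where the radius is at most $\eta$ gives the $\eta\log\frac1{\varepsilon\eta}\log t_0$ term. The multiplicative constants $\frac{R+r}{R-r}$ acting on the starting value $\log_2t_0$ over the final $\log\frac1\varepsilon$ steps (those at scale between $\varepsilon\eta$ and $\eta$) account for the $\log\frac1\varepsilon\log_2 t_0$ term. Controlling this bookkeeping, in particular making the accumulated constants additive rather than multiplicative, is the hard part. I would handle it by applying Borel--Carath\'eodory to $\log\zeta(s)-\log\zeta(\text{centre})$ at each step, so that only differences propagate, and by accepting a constant factor absorbed into $\varepsilon<\varepsilon_0$ and $t_0>C$.
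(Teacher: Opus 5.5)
There is a genuine gap, and it is more than bookkeeping: Borel--Carath\'eodory, Hadamard's three circles, or any chain of discs inside the zero-free region cannot yield the factor $\log\frac1\varepsilon$ in \eqref{eq:4.7}.

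Step~1 is already inadmissible. The disc of radius close to $2\eta$ about $1+\eta+it_0$ contains points such as $1-\eta/4+i(t_0+1.2\eta)$, where $\sigma<1$ and $|t-t_0|>\eta$, so zeros are not excluded there. More generally, $\sigma_1+it_0$ lies within $\varepsilon\eta$ of the line $\sigma=\sigma_0$. Hence any admissible disc whose centre is at distance $\asymp\eta$ and which reaches this point has $R-r\le\varepsilon\eta$, and the Borel--Carath\'eodory factor is $2r/(R-r)\gg1/\varepsilon$. Your bound $\log|\zeta|\le C\log t_0$ also loses the factor $\eta$; that factor must come from $\log|\zeta(1-u+it)|\le u\log t+C\log_2t$, cf.\ \eqref{eq:4.10}.

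Your repair, applying Borel--Carath\'eodory to $\log\zeta(s)-\log\zeta(c_j)$, does not make the errors additive. The quantity $\max\mathrm{Re}\bigl(\log\zeta(s)-\log\zeta(c_j)\bigr)$ contains $-\log|\zeta(c_j)|$, so each step multiplies the previous bound by $1+2r/(R-r)$. Along a chain from scale $\eta$ down to scale $\varepsilon\eta$ these factors compound to a power of $1/\varepsilon$, however the steps are chosen.

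No refinement of this kind can succeed. Take $f(s)=\exp\bigl(-\lambda\eta/(s-\sigma_0-it_0)\bigr)$ with $\lambda=\log_2t_0$. It is analytic and zero-free in $\sigma>\sigma_0$, with $\log|f|\le0$ there and $|\log f|\le\lambda$ on $\sigma\ge1$. That is all the information a disc argument uses, yet $|\log f(\sigma_1+it_0)|=\lambda/\varepsilon$ for $\sigma_1=\sigma_0+\varepsilon\eta$.

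What excludes such behaviour for $\zeta$ is control of its zeros near $1+it_0$, and this is the idea you are missing. The paper integrates the partial-fraction formula \eqref{eq:4.8}, $\frac{\zeta'}{\zeta}(s)=\sum_{|s-\varrho|\le1}\frac1{s-\varrho}+O(\log t)$, along the segment from $\sigma_1+it_0$ to $1+\eta+it_0$. It counts zeros by Jensen's formula together with \eqref{eq:4.10}: $\#\{\varrho:|1+it_0-\varrho|\le r\}\ll r\log t_0+\log_2t_0$.

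\begin{itemize}
\item The zeros in the shells $2^\nu\eta\le|1+it_0-\varrho|<2^{\nu+1}\eta$, $\nu\ge1$, contribute $\ll\eta\log\frac1\eta\log t_0+\log_2t_0$ to the integral.
\item There are $\ll\eta\log t_0+\log_2t_0$ zeros within $2\eta$ of $1+it_0$. Each satisfies $|s-\varrho|\gg\sigma-\sigma_0$ by \eqref{eq:4.6}, so each costs only $\int_{\varepsilon\eta}^{2\eta}du/u\ll\log\frac1\varepsilon$. This is a logarithm, rather than the factor $1/\varepsilon$ forced on Borel--Carath\'eodory.
\end{itemize}

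Adding $\log\zeta(1+\eta+it_0)\ll\log_2t_0$, see \eqref{eq:4.17}, gives \eqref{eq:4.7}.
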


\begin{proof}
We start with Satz VII.4.1. of \cite{Pra1957}, p. 225 applied in the special case
$\mathcal L(s, \chi) = \zeta(s)$:
\beq
\label{eq:4.8}
\frac{\zeta'}{\zeta}(s) = \sum_{|s - \varrho|\leq 1} \frac1{s - \varrho} - \frac1{s - 1} + O\bigl(\log(|t| + 2)\bigr).
\eeq
We will use it along the segment $s = \sigma + it_0$,
$\sigma_0 + \varepsilon \eta \leq \sigma \leq 1 + \eta $ to estimate
\beq
\label{eq:4.9}
I := \bigl|\log \zeta (\sigma_1 + it_0) - \log \zeta(1 + \eta + it_0)\bigr| =
\biggl| \int\limits_{\sigma_1}^{1 + \eta} \frac{\zeta'}{\zeta} (\sigma + it_0)d\sigma\biggr|.
\eeq

A standard application of the estimate (holding for $t > C$)
\beq
\label{eq:4.10}
\zeta(1 - u + it) \ll t^u \log^C t,
\eeq
together with Jensen's formula yields for $s = 1 + it$ and for any $\dfrac{1}{\log t} \ll r < 2$, say,
\beq
\label{eq:4.11}
\#\{\varrho; |s - \varrho| \leq r\} \ll r \log t + \log_2 t.
\eeq

Let us denote for $\nu$ with $2^\nu \leq 1/\eta$
\beq
\label{eq:4.12}
A_\nu = \bigl\{\varrho; 2^\nu \eta \leq |1 + it_0 - \varrho| < 2^{\nu + 1}\eta\bigr\}, \ \ \nu = 0, 1, 2, \ldots \nu(\eta).
\eeq

Then \eqref{eq:4.11} gives for the sum in \eqref{eq:4.8} for any $s = \sigma + it_0$ with $\sigma \in [\sigma_1, 1 + \eta]$
\beq
\label{eq:4.13}
\sum_{\varrho \in A_\nu} \frac1{s - \varrho} \ll \frac{|A_\nu|}{(2^\nu - 1)\eta} \ll \log t_0 + \frac{\log_2 t_0}{2^\nu \eta} \ \text{ for } \ 1 \leq \nu \leq \nu(\eta),
\eeq
\beq
\label{eq:4.14}
\sum_{\nu = 1}^{\nu(\eta)} \sum_{\varrho \in A_\nu} \frac1{s - \varrho} \ll \log\frac1{\eta} \log t_0 + \frac1{\eta} \log_2 t_0.
\eeq
Hence, the total contribution of these terms to the integral $I$ in \eqref{eq:4.9} is
\beq
\label{eq:4.15}
\ll \eta \log \frac1{\eta} \log t_0 + \log_2 t_0 \ll \eta \log \frac1{\eta} \log t_0 \ \text{ if } \ \eta \gg \frac1{\log t_0}.
\eeq
Finally, the contribution of the remaining $H$ zeros with $|1 + it_0 - \varrho| \leq 2\eta$ is
\beq
\label{eq:4.16}
\ll |H| \int\limits_{\varepsilon\eta}^{2\eta} \frac{du}{u} \ll \log \frac1{\varepsilon} (\eta \log t_0 + \log_2 t_0).
\eeq
This proves Lemma \ref{lem:4.2}, taking into account that (see, e.g., \cite{Tit1951, Theorem 3.5 and Theorem 3.11})
\beq
\label{eq:4.17}
\log \zeta (1 + \eta + it_0) \ll \log_2 t_0.
\eeq
\end{proof}

\section{The Huxley--Hooley--Ramachandra contour}
\label{sec:5}

For obtaining an upper bound of $M(x)$ we shall modify the way of integration to the following contour, which appeared in an unpublished work of Huxley and Hooley and later in a modified form in a work of Ramachandra \cite{Ram1976}.

We take the rectangle $1/2 \leq \sigma \leq 1$, $|t| \leq T :=x^2$ and divide it into equal rectangles of height $40 \log^2 T$ (the smaller rectangles at the ends we ignore) so that the real line cuts into two equal parts one of these rectangles $R_0$.
Let $R_n$ $(n = -n_1, \ldots, n_1)$ be these rectangles.
In a typical rectangle (with $|n| \leq n_1$) we fix a new right side and obtain a new rectangle $R_{n, 0}$ as follows.
Take $R_{n - 1}$, $R_n$, $R_{n + 1}$ whenever all are defined and in the union of these rectangles, pick out a zero $\varrho_n$ with the greatest real part $\beta_n$.
Consider only the right edges of these rectangles with $\sigma_n = \beta_n$ instead of $\sigma = 1$, and join the ends of these edges by horizontal lines.

\smallskip
\noindent
{\bf Case 1.} $\vartheta = 1$. If $I' = \{s'\}$ denotes the contour obtained above, then let us choose our final contour as
\beq
\label{eq:5.1}
I = \bigl\{s;\, \text{\rm Re } s = \text{\rm Re }s' + \varepsilon (1 - \text{\rm Re }s'),
\text{\rm Im }s = \text{\rm Im }s'; s' \in I'\bigr\}.
\eeq
We obtain then by Lemma \ref{lem:4.2}, that is, by \eqref{eq:4.7},
\beq
\label{eq:5.2}
\frac1{\zeta(s)} = O\bigl(|t|^{\varepsilon + \eta \log (1/\varepsilon \eta)}\bigr) \ \text{ if } \
|t| > C_0(\varepsilon), \ s \in I.
\eeq

\smallskip
\noindent
{\bf Case 2.} $\vartheta < 1$. In this case, choosing $\varepsilon < (1 - \vartheta)/2$ and
\beq
\label{eq:5.3}
I = \{s; \, \text{\rm Re }s = \vartheta + \varepsilon\}
\eeq
we obtain by Lemma \ref{lem:4.1}
\beq
\label{eq:5.4}
\frac1{\zeta(s)} = O\bigl(|t|^\varepsilon \bigr) \ \ \text{ for } \ |t| > C_1(\varepsilon), \ \ s \in I.
\eeq

\smallskip
Since we have the crucial estimates \eqref{eq:5.2} and \eqref{eq:5.4} on the integration way, our task will be easy.
Let us denote by $I_v$ the union of the vertical, by $I_h$ the union of the horizontal segments of
$I = I_v \cup I_h$.
To any vertical segment of the contour we can associate a zero (every zero appears at most 3 times) and to any horizontal segment of the contour connecting the points
\beq
\label{eq:5.5}
\varrho^* = \beta^* + \varepsilon (1 - \beta^*) + it \ \ \text{ and } \ \ \widetilde\varrho = \widetilde\beta + \varepsilon(1 - \widetilde \beta) + it
\eeq
we associate the zero with larger real part (so $\varrho^*$ if $\beta^* > \widetilde\beta$, and $\widetilde\varrho$ if $\widetilde \beta > \beta^*$, either one if $\beta^* = \widetilde\beta$),
and denote it simply by $\varrho^+ = \beta + \varepsilon(1 - \beta) + i\gamma = 1 - (1 - \varepsilon)\eta + i\gamma$.

In order to estimate the contribution of $I_\nu$ (and $I_h$) associated with zeros separated from the boundary line by at least a small fixed constant $\delta$ we estimate the following quantities separately.

In such a way we obtain for the integration along with $\eta > \delta$, $J'_{T^*}$ (which will stand for the part of the contour $I$ between $|\text{\rm Im }s| \in [T^*, 2T^*]$) for any $u \leq x$ by Lemmas \ref{lem:4.1}--\ref{lem:4.2} and \eqref{eq:5.2}
\begin{align}
\label{eq:5.6}
M(J'_{T^*},u) &:= \int\limits_{J'_{T^*}} \frac{u^s}{s\zeta(s)} ds + O(x^\varepsilon)\\
&\ll \max_{\varrho^+ \in [T^*/2, 3T^*]} {x^{1 - \eta + \varepsilon \eta}} (T^*)^{\varepsilon + C\eta \log \frac{1}{\varepsilon\eta}}\log T^* \ll x^{1 - \delta/2} \ll xe^{-2\omega(x)}. \nonumber
\end{align}

Namely, in case of $\vartheta = 1$ we must have a sequence of zeros with $\beta_n = 1 - \eta_n \to 1$, i.e. $\eta_n \to 0$ and therefore we must have
\beq
\label{eq:5.7}
\omega(x) = \min_\varrho (\eta \log x + \log \gamma) = o(\log x),
\eeq
since for sufficiently large $x$ values we can find zeros with $\gamma = x^{o(1)}$, $\beta = 1 - o(1)$.
Hence, reversed, the zero which minimizes the expression $\eta \log x + \log \gamma$ must therefore satisfy $\eta \to 0$ as $x \to \infty$.
Hence, for this zero we must have $\eta < \eta_0(\varepsilon)$ for $x > x_0(\varepsilon)$ and consequently, also
\beq
\label{eq:5.8}
C \eta \log(1/\varepsilon \eta) < \varepsilon.
\eeq

Using the density theorem of Carlson (cf.\ \eqref{eq:1.6}) we see that the total length of the contour of $J^{''}_{T^*}$ associated with zeros with $\eta \leq \delta$ is in the range $[T^*/2, 3T^*]$ at most $T^{4\delta + \varepsilon}$, hence
\beq
M(J^{''}_{T^*}, u) \ll x^{1 - \eta + \varepsilon \eta}(T^*)^{4\delta + 2\varepsilon - 1}.
\label{eq:5.9}
\eeq
The contribution of integrals on $I_h$ can be incorporated into those of $I_\nu$.

Choosing $\delta \leq \varepsilon/4$ and
repeating this for $T^* = T_\nu = x^2/2^\nu$ we obtain by $\eta \gg \frac1{\log \gamma}$, consequently by $\eta \log x + \log \gamma \gg \sqrt{\log x}$,
\begin{align}
\label{eq:5.10}
\max_{u \leq x} |M(u)| &\leq \max_{\varrho; ~0 < \gamma \leq x^2} \frac{x}{x^{\eta(1 - \varepsilon)}
\gamma^{1 - 5\varepsilon/2}} \cdot \log^2 x
\\ & =\frac{x \log x}{\min_{\varrho; ~0 < \gamma \leq x^2} \exp\left( \eta(1 - \varepsilon)\log x + (1 - 5\varepsilon/2)\log \gamma\right)}
\nonumber
\\ &\leq \frac{x \log x}{e^{\omega(x)(1 - 5\varepsilon/2)}} \leq  \frac{x }{e^{\omega(x)(1 - 3\varepsilon)}}. \nonumber
\end{align}
Hence, by the definition of $\omega_{S_{M,\delta}}(x)$ and $\omega_{D_M}(x)$ (cf.\ \eqref{eq:2.12}--\eqref{eq:2.13})
\beq
\label{eq:5.11}
\omega_{D_M}(x) \geq (1 - 3\varepsilon) \omega(x), \ \ \ \omega_{S_{M,\delta}} (x) \geq (1 - 3\varepsilon) \omega(x).
\eeq

\section{Proofs of the theorems, I. Upper and lower bounds for $|M(x)|$}
\label{sec:6}

Although the proofs of
\beq
\label{eq:6.1}
\log W(x) \sim \log Z(x) = \log \frac{x}{e^{\omega(x)}} \sim \vartheta \log x
\eeq
and (see \eqref{eq:2.10}--\eqref{eq:2.11}) the finer relation
\beq
\label{eq:6.2}
\omega_W(x) \sim \omega_Z(x) = \omega(x)
\eeq
are contained in Theorems B and C, we will include their proofs here, for the sake of completeness.

Firstly, for $1/2 \leq \vartheta < 1$ by $N(T + 1) - N(T) \ll \log T$ we have for any $\varepsilon > 0$ (cf.\ \eqref{eq:1.4})
\beq
\label{eq:6.3}
Z(x) \leq W(x) \ll \sum_{\beta \geq 1/2, \gamma \leq x} \frac{x^\beta}{\gamma} \ll x^\vartheta \log^2 x \ll_\varepsilon x^{\vartheta + \varepsilon}.
\eeq

Secondly, if we take a sequence of zeros with $\beta_n \to \vartheta$, then we can suppose $\beta_n > \vartheta - \varepsilon/2$ for $n \geq n_0$.
Thus, for
\beq
\label{eq:6.4}
x > \gamma_{n_0}^{2/\varepsilon} \ \Longleftrightarrow\ \gamma_{n_0} < x^{\varepsilon/2}
\eeq
we have, together with \eqref{eq:6.3},
\beq
\label{eq:6.5}
x^{\vartheta + \varepsilon} \gg W(x) \geq Z(x) \geq \frac{x^{\beta_{n_0}}}{\gamma_{n_0}} \geq x^{\vartheta - \varepsilon/2 - \varepsilon/2} = x^{\vartheta - \varepsilon},
\eeq
which proves \eqref{eq:6.1}.

The relation $\omega_W(x) \leq \omega_Z(x) = \omega(x)$ being trivial for any $\vartheta$
we have to show only $\omega_W(x) \geq (1 - \varepsilon) \omega(x)$ for any $\varepsilon > 0$, that is, a good upper bound for $W(x)$ as a function of zeta-zeros.
We can divide the zeros with $\beta \geq 1 - \varepsilon$, $\gamma \leq x$ into $\mathcal L^2 = \lceil C \log x / \log 2\rceil^2$ classes $A_{\kappa, \nu}$ according to
\beq
\label{eq:6.6}
\beta = 1 - \eta, \ \ \eta \in \left\lceil \frac{\kappa - 1}{2\mathcal L}, \frac{\kappa}{2\mathcal L}\right\rceil, \ \  \lceil 2\varepsilon \mathcal L\rceil > \kappa \geq 2,
\eeq
\beq
\label{eq:6.7}
\gamma \in (T_\nu, 2T_\nu], \ \ T_\nu = \frac{x}{2^\nu}, \ \ \nu = 1,\ldots, \mathcal L.
\eeq

Using Carlson's density theorem \eqref{eq:1.6} we have for the contribution of the class $A_{\kappa, \nu}$ to $W(x) = \sum \frac{x^\beta}{\gamma}$
\beq
\label{eq:6.8}
W(\kappa, \nu) \ll \frac{x^{1 - (\kappa - 1)/2\mathcal L}}{T_\nu} T_\nu^{4\kappa/2\mathcal L} \log^C T_\nu.
\eeq

Hence, cutting $W(x)$ into two parts, $W_0(x)$ and $W_1(x)$:
\beq
\label{eq:6.9}
W_0(x) := \sum_{\substack{\beta \geq 1 - \varepsilon\\
\gamma \leq x}} \frac{x^\beta}{\gamma} \ll (\log x)^2 \max W(\kappa, \nu) \ll (\log x)^{C + 2} \max_{\substack{\varrho\\ \beta \geq 1 - \varepsilon}} \frac{x^\beta}{\gamma^{1 - 4\varepsilon}}.
\eeq

We have also, by $N(T + 1) - N(T) = O(\log T)$,
\beq
\label{eq:6.10}
W_1(x) := \sum_{\substack{\beta \leq 1 - \varepsilon\\
\gamma \leq x}} \frac{x^\beta}{\gamma} \ll x^{1 - \varepsilon} \log^2 x.
\eeq

Since in case $\vartheta = 1$, the first upper bound \eqref{eq:6.9} is dominant as $x \to \infty$ we obtain from \eqref{eq:6.9}--\eqref{eq:6.10} for the case $\vartheta = 1$
\beq
\label{eq:6.11}
\log \frac{x}{W(x)} \geq \min_\varrho(\eta \log x + (1 - 4\varepsilon)\log \gamma) - (C + 2)\log_2 x \geq (1 - 5\varepsilon)\omega(x),
\eeq
since
\beq
\label{eq:6.12}
\eta \log x + \log \gamma \gg \frac{\log x}{\log \gamma} + \log \gamma \geq 2\sqrt{\log x}.
\eeq

In case of $\vartheta < 1$, by $\omega(x) \leq (1 - \vartheta + \varepsilon) \log x$ we get even easier from \eqref{eq:6.3}
\begin{align}
\label{eq:6.13}
\omega_W(x) = \log \frac{x}{W(x)} &\geq (1 - \vartheta - \varepsilon) \log x \geq (1 - \vartheta + \varepsilon) \left(1 - \frac{3\varepsilon}{1 - \vartheta}\right) \log x\\
&\geq \left(1 - \frac{3\varepsilon}{1 - \vartheta}\right) \omega(x) \nonumber
\end{align}
if $\varepsilon < \varepsilon_0(\vartheta)$.

To continue with $\omega_D(x)$ we distinguish again two cases.

\smallskip
\noindent
{\bf Case 1.} $\vartheta = 1$. Let us consider now the zero $\beta_0 = 1 - \eta_0 + i\gamma_0$ for which the minimum is reached in $\omega(x)$.
Since
\beq
\label{eq:6.14}
\omega(x) \leq (1 - \vartheta + \varepsilon^2) \log x = \varepsilon^2 \log x \ \ \text{ for } \ x > x_0(\varepsilon)
\eeq
we must have then $\log \gamma_0 \leq \varepsilon^2 \log x$, hence
\beq
\label{eq:6.15}
\gamma_0 \leq x^{\varepsilon^2} \ \ \text{ and } \ \ \eta_0 \leq \varepsilon^2.
\eeq

\smallskip
We continue with a result of independent interest which substitutes \eqref{eq:2.9} with a stronger result for $\text{\rm Re }\varrho$ near to $1$, needed for the case $\vartheta = 1$.

\begin{theorem}
\label{th:3}
Let $\varrho_0 = \beta_0 + i\gamma_0 = 1 - \eta_0 + i\gamma_0$ a zero of $\zeta(s)$ with $\gamma_0 > 0$.
Let $Y$ be sopme large parameter, $Y_0 = Ye^3$, $0<\eta_0 < 1/10$, $2\eta_0 < \kappa < \frac12 - 2\eta_0$.
Then
\beq
\label{eq:6.15}
\int\limits_1^{Y_0} \frac{|M(x)|dx}{x^{1 - \kappa + \beta_0}/\gamma_0} \gg \frac{\kappa Y^\kappa}{\gamma_0^{C(\eta_0 + \kappa)^{3/2}}(\log \gamma_0 Y)^C}
\eeq
\end{theorem}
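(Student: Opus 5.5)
We follow the method behind \eqref{eq:2.9} from \cite{Pin1982/83}: test the Mellin transform of $M$ against a kernel that isolates the zero $\varrho_0$. The one new ingredient is that $1/\zeta$ is controlled on a line just to the left of $\sigma=1$ by the Korobov--Vinogradov bound \eqref{eq:3.6}, instead of only on $\sigma = 2$.

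\textbf{Mellin formula.} For $\text{\rm Re } s>1$ we have
\[
\frac{1}{s\zeta(s)}=\int_1^\infty M(x)x^{-s-1}\,dx .
\]
We apply this with a smooth cutoff. Take the kernel
\[
K(s)=\frac{Y^{s-\varrho_0}\,\bigl(e^{(s-\varrho_0)}-1\bigr)^{k}}{(s-\varrho_0)^{k}},
\qquad \text{or a Gaussian-type weight } e^{(s-\varrho_0)^2/\kappa'} .
\]
Its inverse Mellin transform $k(x)$ is supported in $[1,Y_0]$ with $Y_0=Ye^3$ (so $k=3$ suffices), and $|k(x)|\ll x^{-\varrho_0}\times$ bounded.

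\textbf{Two evaluations of one integral.} Consider
\[
J=\frac1{2\pi i}\int_{(\sigma_2)}\frac{K(s)}{s\zeta(s)}\,ds .
\]
We compute $J$ in two ways.

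First, by the Mellin formula, $J=\int_1^{Y_0}M(x)\,x^{-1}k(x)\,dx$. This is bounded by $\int_1^{Y_0}|M(x)|\,x^{-1-\beta_0+\kappa}\,dx$ times a constant, after shifting the weight by $x^{\kappa}$. The shift is introduced deliberately: we put the line of integration at $\text{\rm Re } s=\beta_0+\kappa$, which is where the factor $x^{\kappa-\beta_0}$ in the statement comes from.

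Second, we shift the contour from $\sigma_2>1$ to $\text{\rm Re } s = 1-\eta_0-\kappa$. Choose $k$ so that $K$ has a zero of order matching the multiplicity of $\varrho_0$, making the residue at $\varrho_0$ simple. Then the residue of $K(s)/(s\zeta(s))$ at $s=\varrho_0$ is of size $Y^{0}/\bigl(|\varrho_0|\,|\zeta^{(m)}(\varrho_0)|\bigr)$. More naturally, rather than crossing $\varrho_0$, keep the residue's complement: write $1/\zeta$ near $\varrho_0$ so that the main term is $\gg \kappa Y^{\kappa}/\gamma_0$ on the segment $\text{\rm Re } s=\beta_0+\kappa$, $|t-\gamma_0|\le 1$. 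This uses that $|1/\zeta(\beta_0+\kappa+i\gamma_0)|\ge |\zeta|^{-1}$, bounded below via \eqref{eq:3.6}:
\[
|\zeta(\beta_0+\kappa+it)|\ll \gamma_0^{C(\eta_0+\kappa)^{3/2}}\log^C\gamma_0 .
\]
This is exactly the denominator $\gamma_0^{C(\eta_0+\kappa)^{3/2}}$ in \eqref{eq:6.15}. The factor $\kappa$ arises from the distance $\kappa$ between the line and $\varrho_0$, since the kernel $K$ has size $\asymp \kappa^{k}$ there, normalised.

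\textbf{Tails and lower bound.} The remaining pieces must be shown to be small:
\begin{itemize}
\item contributions far from $\gamma_0$, handled by the decay $|s-\varrho_0|^{-k}$ of $K$ together with the polynomial bound \eqref{eq:4.10} for $\zeta$ and a crude bound for $1/\zeta$ on $\sigma\ge 1+1/\log t$;
\item contributions to the left, which are small because $Y^{s-\varrho_0}$ is small there.
\end{itemize}
Comparing the two evaluations gives the stated lower bound with a $(\log\gamma_0Y)^{C}$ loss.

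\textbf{Main obstacle.} The hardest step is the genuinely nontrivial lower bound for the main term. Upper bounds for $\zeta$ give lower bounds for $|1/\zeta|$ only pointwise, and the integral over the short segment could cancel. To handle this, we would pass to a weighted average in $t$ against $|1/\zeta|$. The route is Cauchy--Schwarz against $\int|\zeta|\,|1/\zeta|\ge$ length, or a Tur\'an power-sum style argument as in \cite{Pin1982/83}. The constraint $2\eta_0<\kappa<\tfrac12-2\eta_0$ keeps the line strictly between $\varrho_0$ and the region where \eqref{eq:3.6} and the density bounds remain effective.
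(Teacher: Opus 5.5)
\textbf{There is a genuine gap.} The proposal never produces a quantity that is provably large, and both steps meant to supply one fail.

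Your second evaluation shifts $\frac{K(s)}{s\zeta(s)}$ from $\sigma_2>1$ to $\mathrm{Re}\,s=1-\eta_0-\kappa$. This crosses every zero with $\beta>1-\eta_0-\kappa$, producing residues involving $1/\zeta^{(m)}(\varrho)$ that you cannot control. The integral on the new line also needs a \emph{lower} bound for $|\zeta|$ there, which neither \eqref{eq:3.6} nor anything else available provides. This is exactly the obstruction that makes $M(x)$ harder than $\Delta(x)$.

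Your fallback, a pointwise lower bound for $|1/\zeta|$ via \eqref{eq:3.6}, cannot bound a signed integral from below, as you note yourself. Cauchy--Schwarz against $\int|\zeta|\,|1/\zeta|$ only controls $\int|1/\zeta|$, not $\bigl|\int K/(s\zeta)\bigr|$.

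Your bookkeeping of the loss is also off. Since $\kappa>2\eta_0$, the line $\mathrm{Re}\,s=\beta_0+\kappa$ lies to the right of $\sigma=1$, where \eqref{eq:3.6} gives nothing, so the factor $\gamma_0^{C(\eta_0+\kappa)^{3/2}}$ cannot arise there. It belongs to real part $1-\eta_0-\kappa$, at distance $\kappa$ to the \emph{left} of $\varrho_0$, where $\zeta$ is only bounded from above.

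\textbf{The missing idea} is to build $\zeta$ into the test kernel so that it cancels $1/\zeta$ identically, and to use \eqref{eq:3.6} only as an \emph{upper} bound for that kernel.
\begin{itemize}
\item \emph{The kernel.} With $\lambda=\log Y$ the paper takes $g(s)=\frac{(s+\varrho_0-\kappa-1)\zeta(s+\varrho_0-\kappa)}{(s-\kappa)(s+1)^4}$. This is regular in $\sigma>-1$ precisely because $\zeta(\varrho_0)=0$ removes the pole at $s=\kappa$. It then sets $r_\lambda(H)=\frac1{2\pi i}\int_{(3)}e^{s^2/\lambda+Hs}g(s)\,ds$.
\item \emph{The explicit side.} By the Mellin formula, $V=\int_1^\infty M(x)x^{-1-\varrho_0+\kappa}r_\lambda(\lambda-\log x)\,dx$ equals $\frac1{2\pi i}\int_{(3)}e^{s^2/\lambda+\lambda s}\frac{s+\varrho_0-\kappa-1}{(s+\varrho_0-\kappa)(s-\kappa)(s+1)^4}\,ds$, in which no zeta function remains. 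Moving to $\sigma=-1/3$ picks up only the explicit pole at $s=\kappa$, so $V=(1-1/\varrho_0)(1+\kappa)^{-4}e^{\kappa^2/\lambda}Y^\kappa+O(Y^{-1/3})$.
\item \emph{The $M$-side, large $x$.} Shifting $r_\lambda$ to $\sigma=\lambda$ shows that the range $x>Ye^3$ contributes $O(Y^{-1})$; this is the origin of $Y_0$.
\item \emph{The $M$-side, $x\le Y_0$.} Shifting $r_\lambda$ to $\sigma=0$ crosses no residue, since $g$ is regular. Using \eqref{eq:3.6} at real part $1-\eta_0-\kappa$, together with $|s-\kappa|\ge\kappa$ and $|s+\varrho_0-\kappa-1|\ll\gamma_0+|t|$, gives $|r_\lambda(\lambda-\log x)|\ll\kappa^{-1}\gamma_0\cdot\gamma_0^{C(\eta_0+\kappa)^{3/2}}(\log\gamma_0Y)^C$.
\end{itemize}
Comparing $|V|\gg Y^\kappa$ with $|V|\le\max|r_\lambda|\int_1^{Y_0}|M(x)|x^{-1-\beta_0+\kappa}\,dx+O(Y^{-1})$ gives the theorem. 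The factors $\kappa$ and $\gamma_0$ land exactly where the statement has them. No contour shift of $1/\zeta$, no information about other zeros, and no lower bound for $|\zeta|$ is ever needed.
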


\begin{proof}
Let
\beq
\label{eq:6.16}
\lambda = \log Y, \ \ \ g(s) := \frac{(s + \varrho_0 - \kappa - 1)\zeta(s + \varrho_0 - \kappa)}{(s - \kappa)(s + 1)^4},
\eeq
\beq
\label{eq:6.17}
r_\lambda(H) := \frac1{2\pi i} \int\limits_{(3)} e^{s^2/\lambda + Hs} g(s) ds.
\eeq

We note that $g(s)$ is analytic for $\sigma > -1$, since it is regular at $s = 1 - \varrho_0 + \kappa$ and $s = \kappa$.
We start with the formula (valid for $\sigma > 1$)
\beq
\label{eq:6.18}
\int\limits_1^\infty \frac{M(x)}{x^{s + 1}} dx = \frac1{s\zeta(s)}.
\eeq
Since we will prove (cf.\ \eqref{eq:6.20}) that $r_\lambda(\lambda - \log x)$ decreases exponentially for $x > Y_0$ we may define the integral and later interchange the order of integrations below.
Let
\begin{align}
\label{eq:6.19}
V :&= \int\limits_1^\infty \frac{M(x)}{x^{1 + \varrho_0 - \kappa}} r_\lambda (\lambda - \log x)dx\\
&= \frac1{2\pi i} \int\limits_{(3)} e^{s^2/\lambda + \lambda s} g(s)\int\limits_1^\infty \frac{M(x)}{x^{1 + \varrho_0 - \kappa + s}}dx ds \nonumber\\
&= \frac1{2\pi i} \int\limits_{(3)} e^{s^2/\lambda + \lambda s} \frac{\zeta(s + \varrho_0 - \kappa)(s + \varrho_0 - \kappa - 1) ds}{\zeta(s + \varrho_0 - \kappa)(s + \varrho_0 - \kappa)(s + 1)^4(s - \kappa)}\nonumber\\
&= \left(1 - \frac1{\varrho_0}\right)(1 + \kappa)^{-4} e^{\kappa^2/\lambda}Y^\kappa + O(Y^{-1/3})\nonumber
\end{align}
if we translate the last integration to the vertical line $\sigma = -1/3$.

In order to see the decay of $r_\lambda(H)$ for $H \to -\infty$, that is, for $x = e^{\lambda - H} \to \infty$ we can translate the way of integration in \eqref{eq:6.17} to the vertical line $\sigma = \lambda$:
\begin{align}
\label{eq:6.20}
r_\lambda(H) &= \frac1{2\pi i} \int\limits_{(\lambda)} e^{s^2/\lambda + Hs} g(s)ds \ll \int\limits_{-\infty}^\infty e^{\lambda - t^2/\lambda + H \lambda}dt\\
&\ll \sqrt{\lambda} e^{(H + 1)\lambda} = \sqrt{\lambda} e^{\lambda^2 + \lambda} x^{-\lambda}.\nonumber
\end{align}
Consequently, the part of the integral $V$ with $x \geq Y_0 \Leftrightarrow H \leq - 3$ is
\begin{align}
\label{eq:6.21}
&\int\limits_{Y_0}^\infty \frac{M(x)}{x^{1 + \varrho_0 - \kappa}} r_\lambda (\lambda - \log x)dx \ll \sqrt{\lambda}e^{\lambda^2 + \lambda} \int\limits_{e^{\lambda + 3}} \frac{dx}{x^\lambda}\\
&= \sqrt{\lambda} e^{\lambda^2 + \lambda} \cdot \frac1{\lambda - 1} e^{-(\lambda + 3)(\lambda - 1)} \ll e^{-\lambda} = Y^{-1}.\nonumber
\end{align}

Further, integrating on $\sigma = 0$ we obtain for $H = \lambda - \log x \in [-3, \lambda]$, that is, for $x \leq Y_0$
\begin{align}
\label{eq:6.22}
r_\lambda(H) &\ll \int\limits_{-2\lambda}^{2\lambda} e^{-t^2/\lambda} \frac{\max\limits_{|t| \leq 2\lambda} \bigl|\zeta(\beta_0 - \kappa + i(\gamma_0 + t))\bigr|(\gamma_0 + 2\lambda)}{\kappa(t^2 + 1)^2}dt\\
&\ll \frac1{\kappa} (\gamma_0 + \lambda)^{C(\eta_0 + \kappa)^{3/2}} \log^C(\gamma_0 + \lambda),
\nonumber
\end{align}
using the Korobov-Vinogradov type estimate \eqref{eq:3.6}.
Now, $|M(x)| \leq x$, \eqref{eq:6.19}, \eqref{eq:6.21} and \eqref{eq:6.22} prove Theorem \ref{th:3}.

\bigskip
We can estimate trivially the integration on $[1, Y^{1 - \varepsilon'}]$ as
\beq
\label{eq:6.23}
\int\limits_1^{Y^{1 - \varepsilon'}} \frac{|M(x)|}{x^{1 - \kappa + \beta_0}/\gamma_0} dx
\leq \int\limits_1^{Y^{1 - \varepsilon'}} \frac{dx}{x^{1 - \eta_0 - \kappa}/\gamma_0}
\leq \frac{\gamma_0 Y^{(1 - \varepsilon')(\eta_0 + \kappa)}}{\eta_0 + \kappa}.
\eeq
This implies by Theorem \ref{th:3} the following

\setcounter{corollary}{2}
\begin{corollary}
\label{cor:3}
Suppose beyond the conditions of Theorem \ref{th:3}
\beq
\label{eq:6.24}
\varepsilon' = \varepsilon/8, \ \ \sqrt{\varepsilon} \geq \kappa \geq \varepsilon, \ \ \eta_0 \leq \varepsilon^2/100, \ \ \gamma_0 \leq Y^{\varepsilon^2/100}, \ \ Y > Y_0(\varepsilon).
\eeq
Then we have
\beq
\label{eq:6.25}
\int\limits_{Y^{1 - \varepsilon/8}}^Y \frac{|M(x)|dx}{x^{1 - \kappa + \beta_0}/\gamma_0}
\gg \frac{\kappa Y^\kappa}{\gamma_0^{C(\eta_0 + \kappa)^{3/2}}(\log \gamma_0 Y)^C}.
\eeq
\end{corollary}

\begin{proof}
Follows from \eqref{eq:6.15} and \eqref{eq:6.23} by \eqref{eq:6.24}, in view of
\beq
\label{eq:6.26}
\kappa - (1 - \varepsilon')(\eta_0 + \kappa) \geq \varepsilon'\kappa - \eta_0 \geq \frac{\varepsilon^2}{10}.
\eeq
\end{proof}

In order to obtain a lower estimation for the weighted average of $|M(x)|$ in some interval $[Y_1, Y_0]$ we have to use \eqref{eq:6.15} and prove an upper bound for the weighted average of $|M(x)|$ in $[1, Y_1]$.
To obtain the upper bound we can use the results of Section \ref{sec:5}, namely, \eqref{eq:5.10}.

\section{Proofs of the theorems, II. Lower bound for the average of $|M(x)|$}
\label{sec:7}

We settled already the easier case $\vartheta < 1$ and the upper bound for $|M(x)|$ in the case $\vartheta = 1$ (cf.\ \eqref{eq:5.10}--\eqref{eq:5.11}) for all values of $x$.
So our task is now to prove a lower bound for the average value of $|M(x)|$.
This will automatically yield a lower bound for the maximum of $|M(x)|$ in an interval of type $[Y^{1 - \delta}, Y]$ for any $\delta > 0$.
Theorem \ref{th:3} points in this direction already.
Interestingly, in order to obtain the desired lower estimate for the average of $|M(x)|$ we have to use our above mentioned upper estimate of $|M(x)|$, too.

Let us investigate first for any $x \in [Y^{1 - \varepsilon'}, Y]$
\beq
\label{eq:7.1}
A(x) := \frac{Z(x)}{Z_0(x)} := \exp \left(-\eta_x \log x - \log \gamma_x + \eta_0 \log x + \log \gamma_0\right),
\eeq
where $\varrho_0 = 1 - \eta_0 + i\gamma_0$ is a zero for which the maximum $Z(Y)$ is attained and $\varrho_x = 1 - \eta_x + i\gamma_x$ is a zero for which $Z(x)$ is attained, whereas $Z_0(x):=x^{1-\eta_0}/\gamma_0$ is a new quantity, where the $\zeta$-zero $\varrho_0$ is fixed and only $x$ changes. We may suppose $\eta_0, \eta_x < \varepsilon/100$.

By the definition we have
\beq
\label{eq:7.2}
\eta_0 \log Y + \log \gamma_0 \leq \eta_x \log Y + \log \gamma_x,
\eeq
so
\beq
\label{eq:7.3}
\log \gamma_0 - \log \gamma_x \leq \log Y(\eta_x - \eta_0).
\eeq
Hence, from \eqref{eq:7.1}
\beq
\label{eq:7.4}
A(x) \leq \exp \bigl((\eta_x - \eta_0)(\log Y - \log x)\bigr).
\eeq
If $\eta_0 \geq \eta_x$, then $A(x) \leq 1$. If $\eta_0 \leq \eta_x$, then
\beq
\label{eq:7.5}
A(x) \leq \exp\left(\eta_x \cdot \log x \frac{\varepsilon'}{1 - \varepsilon'}\right) \leq \exp(2 \varepsilon'\eta_x \log x) \leq e^{2\varepsilon' \omega(x)}.
\eeq
Consequently,
\beq
\label{eq:7.6}
\frac{|M(x)|}{Z_0(x)} = \frac{|M(x)|}{Z(x)} \cdot \frac{Z(x)}{Z_0(x)} = \frac{|M(x)|}{Z(x)} \cdot A(x) \leq \frac{|M(x)|}{Z(x)} e^{2\varepsilon'\omega(x)}.
\eeq
Since $\omega(x)$ is increasing as a function of $x$, applying \eqref{eq:5.10} with $\varepsilon'$ in place of $\varepsilon$ we obtain by $Z(x) = xe^{-\omega(x)}$
\beq
\label{eq:7.7}
\frac{|M(x)|}{x^{\beta_0}/\gamma_0} =
\frac{|M(x)|}{Z_0(x)} \leq \frac{|M(x)|}{Z(x)} e^{2\varepsilon'\omega(x)} \ll e^{7\varepsilon'\omega(x) + 2\varepsilon'\omega x} = e^{9\varepsilon'\omega(x)}\leq e^{\varepsilon \omega(Y)}
\eeq
if we choose $\varepsilon' = \varepsilon/9$.
Let us choose further $\kappa = \varepsilon^{1/2}$.
Therefore we have by \eqref{eq:7.7} with $Y^* \in [Y^{1 - \varepsilon/8}, Y]$, $\omega_0 = \omega(Y)$
\beq
\label{eq:7.8}
\int\limits_{Y^{1 - \varepsilon/8}}^{Y^*} \frac{|M(x)|dx}{x^{1 - \kappa + \beta_0}/\gamma_0}
\ll e^{\varepsilon\omega_0} \int\limits_{Y^{1 - \varepsilon/8}}^{Y^*} \frac{dx}{x^{1 - \kappa}}
\leq \frac{e^{\varepsilon \omega_0}}{\kappa} (Y^*)^\kappa.
\eeq
On the other hand by Corollary \ref{cor:3} we have (as its conditions in \eqref{eq:6.24} are satisfied)
\beq
\label{eq:7.9}
\int\limits_{Y^{1 - \varepsilon/8}}^Y \frac{|M(x)|dx}{x^{1 - \kappa + \beta_0}/\gamma_0} \geq
c^* \frac{\kappa Y^\kappa}{\gamma_0^{C\kappa^{3/2}}(\log Y)^C}=:J_0.
\eeq

Now, we will use the following easy (in fact, trivial) auxiliary consideration. Let $g(x)>0$ be a positive decreasing function (weight) on an interval $[U,V]$, and denote $J :=J(F):= \int\limits_U^V F(x)g(x)dx$ for any integrable function $F(x)$.  Further, let $f(x)$ be a positive and continuous function on $[U,V]$, and let $J_0$ with $0\le J_0 \le J(f)=\int\limits_U^V f(x)g(x)dx$ be given.
Consider the extremal problem of minimizing $H :=H(F):= \int\limits_U^V F(x)dx$ under the condition that the unknown non-negative integrable function $0 \le F(x) \leq f(x)$ satisfies $J(F)\ge J_0$. Then $H(F)$ is minimal if and only if there exists a point $x^* \in [U,V]$ such that $F(x) = f(x)$ for $x < x^*$ and $F(x) = 0$ for $x > x^*$.

Let us apply this argument (cf.\ \eqref{eq:7.7}) with $(\omega_0 = \omega(Y))$
\beq
\label{eq:7.10}
F(x) = |M(x)| \leq Z_0(x)e^{\varepsilon \omega_0} = f(x), \ \  g(x) = \frac1{x^{1 - \kappa} Z_0(x)}.
\eeq
Let us calculate (estimate) the value $Y^*$ for which
\beq
\label{eq:7.11}
\int\limits_{Y^{1 - \varepsilon/8}}^{Y^*} f(x)g(x)dx = \int\limits_{Y^{1 - \varepsilon/8}}^{Y^*} \frac{e^{\varepsilon \omega_0}}{x^{1 - \kappa}} dx \sim \frac{e^{\varepsilon\omega_0}(Y^*)^\kappa}{\kappa} = c^*\frac{\kappa Y^\kappa}{\gamma_0^{C\kappa^{3/2}}(\log Y)^C}.
\eeq
Hence,
\beq
\label{eq:7.12}
\left(\frac{Y}{Y^*}\right)^\kappa \sim \frac{1}{c^*\kappa^2} e^{\varepsilon\omega_0} \gamma_0^{C\kappa^{3/2}}(\log Y)^C.
\eeq

We have
\beq
\label{eq:7.13}
\omega_0 = \eta_0 \log Y + \log \gamma_0 \gg \frac{\log Y}{\log \gamma_0} + \log \gamma_0 \gg \sqrt{\log Y},
\eeq
so by $\gamma_0 \leq e^{\omega_0}$ we obtain from \eqref{eq:7.12} and $\kappa = \sqrt{\varepsilon}$
\beq
\label{eq:7.14}
\frac{Y}{Y^*} \ll_\varepsilon e^{c \varepsilon^{1/4}\omega_0}(\log Y)^C \ll_\varepsilon e^{\varepsilon^{1/5}\omega_0}.
\eeq
This implies
\begin{align}
\label{eq:7.15}
\int\limits_{Y^{1 - \varepsilon/8}}^Y |M(x)|dx &\geq \int\limits_{Y^{1 - \varepsilon/8}}^{Y^*} f(x) dx = \int\limits_{Y^{1 - \varepsilon/8}}^{Y^*} e^{\varepsilon \omega_0}Z_0(x)dx\\
&\sim \frac{e^{\varepsilon\omega_0}(Y^*)^{2 - \eta_0}}{(2 - \eta_0)\gamma_0} \gg_\varepsilon \frac{Y^{2 - \eta_0}}{\gamma_0} e^{-2\varepsilon^{1/5}\omega_0}.
\nonumber
\end{align}
Consequently
\beq
\label{eq:7.17}
D_M(Y) = \frac1{Y} \int\limits_{1}^Y |M(x)|dx \gg_\varepsilon Z(Y)e^{-2\varepsilon^{1/5} \omega(Y)},
\eeq
which furnishes the assertion since $2\varepsilon^{1/5}$ is arbitrarily small.
\end{proof}

\bigskip
{\small
\noindent
J\'anos Pintz\\
HUN-REN Alfr\'ed R\'enyi Institute of Mathematics \\
Budapest, Re\'altanoda u. 13--15\\
H-1053 Hungary\\
e-mail: pintz@renyi.hu}

\end{document}